\begin{document}
\newtheorem{theorem}{Theorem}[section]
\newtheorem{lemma}[theorem]{Lemma}
\newtheorem{claim}[theorem]{Claim}
\newtheorem{definition}[theorem]{Definition}
\newtheorem{conjecture}[theorem]{Conjecture}
\newtheorem{proposition}[theorem]{Proposition}
\newtheorem{algorithm}[theorem]{Algorithm}
\newtheorem{corollary}[theorem]{Corollary}
\newtheorem{observation}[theorem]{Observation}
\newtheorem{problem}[theorem]{Open Problem}
\newcommand{\noin}{\noindent}
\newcommand{\ind}{\indent}
\newcommand{\al}{\alpha}
\newcommand{\om}{\omega}
\newcommand{\R}{{\mathbb R}}
\newcommand{\N}{{\mathbb N}}
\newcommand\eps{\varepsilon}
\newcommand{\E}{\mathbb E}
\newcommand{\Prob}{\mathbb{P}}
\newcommand{\pl}{\textrm{C}}
\newcommand{\dang}{\textrm{dang}}
\renewcommand{\labelenumi}{(\roman{enumi})}
\newcommand{\bc}{\bar c}
\newcommand{\G}{{\mathcal{G}}}
\newcommand{\expect}[1]{\E \left [ #1 \right ]}
\newcommand{\floor}[1]{\left \lfloor #1 \right \rfloor}
\newcommand{\ceil}[1]{\left \lceil #1 \right \rceil}
\newcommand{\of}[1]{\left( #1 \right)}
\newcommand{\set}[1]{\left\{ #1 \right\}}
\newcommand{\angs}[1]{\left\langle #1 \right\rangle}
\newcommand{\sqbs}[1]{\left[ #1 \right]}
\newcommand{\sm}{\setminus}
\newcommand{\bfrac}[2]{\of{\frac{#1}{#2}}}
\renewcommand{\k}{\kappa}
\renewcommand{\l}{\ell}
\renewcommand{\b}{\beta}
\renewcommand{\a}{\alpha}
\renewcommand{\o}{\omega}
\newcommand{\Bin}{\textrm{Bin}}
\newcommand{\Cc}{{\mathcal{C}}}
\newcommand{\Nn}{{\mathcal{N}}}
\newcommand{\Rr}{{\mathcal{R}}}
\newcommand{\Ee}{{\mathcal{E}}}
\newcommand{\jlo}{2j\frac{\log (n/j)}{\log j}}
\newcommand{\opoo}{(1+o(1))}
\renewcommand{\i}{\ell}

\title[{Power of $k$ choices and rainbow spanning trees in random graphs}]{Power of $k$ choices \\ and \\ rainbow spanning trees in random graphs}

\author{Deepak Bal}
\address{Department of Mathematics, Miami University, Oxford, OH, 45056, U.S.A.}
\email{baldc@miamioh.edu}

\author{Patrick Bennett}
\address{Department of Computer Science, University of Toronto, Toronto, ON, Canada, M5S 3G4}
\email{patrickb@cs.toronto.edu}

\author{Alan Frieze}
\address{Department of Mathematical Sciences, Carnegie Mellon University, 5000 Forbes Av., 15213, Pittsburgh, PA, U.S.A}
\thanks{The third author is supported in part by NSF Grant CCF0502793}
\email{alan@random.math.cmu.edu}

\author{Pawe\l{} Pra\l{}at}
\address{Department of Mathematics, Ryerson University, Toronto, ON, Canada, M5B 2K3}
\thanks{The fourth author is supported in part by NSERC and Ryerson University}
\email{\texttt{pralat@ryerson.ca}}

\maketitle
\begin{abstract}
We consider the Erd\H{o}s-R\'enyi random graph process, which is a stochastic process that starts with $n$ vertices and no edges, and at each step adds one new edge chosen uniformly at random from the set of missing edges. Let $\G(n,m)$ be a graph with $m$ edges obtained after $m$ steps of this process. Each edge $e_i$ ($i=1,2,\ldots, m$) of $\G(n,m)$ independently chooses precisely $k \in \N$ colours, uniformly at random, from a given set of $n-1$ colours (one may view $e_i$ as a multi-edge). We stop the process prematurely at time $M$ when the following two events hold: $\G(n,M)$ is connected and every colour occurs at least once ($M={n \choose 2}$ if some colour does not occur before all edges are present; however, this does not happen asymptotically almost surely). The question addressed in this paper is whether $\G(n,M)$ has a rainbow spanning tree (that is, multicoloured tree on $n$ vertices). Clearly, both properties are necessary for the desired tree to exist. 

In 1994, Frieze and McKay investigated the case $k=1$ and the answer to this question is ``yes'' (asymptotically almost surely). However, since the sharp threshold for connectivity is $\frac {n}{2} \log n$ and the sharp threshold for seeing all the colours is $\frac{n}{k} \log n$, the case $k=2$ is of special importance as in this case the two processes keep up with one another. In this paper, we show that asymptotically almost surely the answer is ``yes'' also for $k \ge 2$.
\end{abstract}

\section{Introduction and the main result}

In this paper, we consider the \textbf{Erd\H{o}s-R\'enyi random graph process}, which is a stochastic process that starts with $n$ vertices and no edges, and at each step adds one new edge chosen uniformly at random from the set of missing edges. Formally, let $N={n \choose 2}$ and let $e_1, e_2, \ldots, e_N$ be a random permutation of the edges of the complete graph $K_n$. The graph process consists of the sequence of random graphs $( \G(n,m) )_{m=0}^{N}$, where $\G(n,m)=(V,E_m)$, $V = [n] := \{1, 2, \ldots, n\}$, and $E_m = \{ e_1, e_2, \ldots, e_m \}$. It is clear that $\G(n,m)$ is a graph taken uniformly at random from the set of all graphs on $n$ vertices and $m$ edges. (See, for example,~\cite{bol, JLR} for more details.)

All asymptotics throughout are as $n \rightarrow \infty $ (we emphasize that the notations $o(\cdot)$ and $O(\cdot)$ refer to functions of $n$, not necessarily positive, whose growth is bounded). We say that an event in a probability space holds \textbf{asymptotically almost surely} (or \textbf{a.a.s.}) if the probability that it holds tends to $1$ as $n$ goes to infinity. We often write $\G(n,m)$ when we mean a graph drawn from the distribution $\G(n,m)$.  

\bigskip

A set of edges $S$ is said to be \textbf{rainbow} if each edge of $S$ is in a different colour. When considering adversarial (worst-case) colouring, the guaranteed existence of a rainbow structure is called an \emph{Anti-Ramsey}\/ property.  Erd\H{o}s, Ne\v{s}et\v{r}il, and R\"odl \cite{ENR}, Hahn
and Thomassen~\cite{HT} and Albert, Frieze, and Reed~\cite{AFR} (correction in Rue~\cite{Rue}) considered colourings of the edges of the complete graph $K_n$ where no colour is used more than $k$ times. It was shown in~\cite{AFR} that if $k \leq n/64$, then there must be a rainbow Hamilton cycle.  Cooper and Frieze~\cite{CF1} proved a random graph threshold for this property to hold in almost every graph in the space studied.

Let us now focus on the random colouring situation. Cooper and Frieze~\cite{CF} showed that if $m\geq Kn\log n$ and there are at least $Kn$ colours available, for $K$ sufficiently large, then a.a.s. $\G(n,m)$ contains a rainbow Hamilton cycle. This was improved by Frieze and Loh~\cite{FL} to give $K=1+o(1)$.

In this paper we are concerned with the existence of rainbow spanning trees of $\G(n,m)$. Suppose that each edge $e_i$ ($i=1,2,\ldots, m$) of $\G(n,m)$ independently chooses precisely $k \in \N$ colours, uniformly at random, from a given set $W$ of $n-1$ colours. In other words, each edge $e_i$ has assigned a set $c(e_i)$ of $k$ colours; for every $i$ and every $S \subseteq W, |S|=k$, we have $c(e_i) = S$ with probability ${n-1 \choose k}^{-1}$. (It is convenient to view $e_i$ as a multi-edge, hence $\G(n,m)$ may be viewed as a multi-graph on $km$ coloured edges.)

\bigskip

We are concerned with the following three events:
\begin{eqnarray*}
\Cc_m &=& \{ \text{$\G(n,m)$ is connected} \}, \\
\Nn_m = \Nn_m(k) &=& \{ \text{$\G(n,m)$ contains edges in every colour} \},\\
\Rr_m = \Rr_m(k) &=& \{ \text{$\G(n,m)$ has a rainbow spanning tree} \}.
\end{eqnarray*}
Let $\Ee_m$ stand for one of the above three sequences of events and let 
$$
m_{\Ee} = \min\{m \in \N : \Ee_m \text{ occurs}\},
$$
provided that such an $m$ exists. (Note that $m_{\Cc}$ is always defined but the other two might not be.) Moreover, if $m_{\Rr}$ is defined, then so is $m_{\Nn}$ and clearly
$$
m_{\Rr} \ge \max\{m_{\Cc}, m_{\Nn} \}.
$$ 

In 1994, Frieze and McKay~\cite{Frieze_Mckay} investigated the case $k=1$ and they showed that a.a.s.\ $m_{\Rr} = \max\{m_{\Cc}, m_{\Nn} \}$. It is well known that the sharp threshold for connectivity is $\frac {n}{2} \log n$; in fact,
\begin{equation}\label{eq:thr_conn}
\Prob ( \Cc_m ) = (1+o(1)) e^{-e^{-c}},
\end{equation}
provided that $m = \frac {n}{2} (\log n + c), c \in \R$ (as usual, see, for example~\cite{bol, JLR}). Moreover, it follows from the coupon collector problem that the sharp threshold for seeing all the colours is $n \log n$ (for $k=1$). Therefore, processes corresponding to the two obvious necessary conditions are not synchronized for $k=1$. On the other hand, it can be easily generalized and can be shown that for any $k \in \N$ we have
\begin{equation}\label{eq:thr_colours}
\Prob (\Nn_m) = (1+o(1)) e^{-e^{-c}},
\end{equation}
provided that $m = \frac {n}{k} (\log n + c), c \in \R$ (see, Lemma~\ref{lem:coupon-k}). Hence, $k=2$ is of special importance as in this case the two processes keep up with one another. 

\bigskip

In this paper, we generalize the result of Frieze and McKay~\cite{Frieze_Mckay} and show the following result. 
\begin{theorem}\label{thm:main}
For every $k \ge 2$, we have that a.a.s.
$$
m_{\Rr} = \max\{m_{\Cc}, m_{\Nn} \}.
$$ 
\end{theorem}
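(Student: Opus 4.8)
The plan is to fix $m = M := \max\{m_{\Cc}, m_{\Nn}\}$ and show that a.a.s.\ $\G(n,M)$ has a rainbow spanning tree. Since $M = \opoo \frac n2 \log n$ when $k \ge 3$ (connectivity dominates) and $M = \opoo \frac n2 \log n$ when $k = 2$ as well (the two thresholds coincide up to the $e^{-e^{-c}}$ fluctuation), in all cases $\G(n,M)$ is a.a.s.\ a connected graph of average degree $\opoo \log n$ with a sprinkling of low-degree vertices near the connectivity threshold. The natural tool is the matroid-union / Nash--Williams--Tutte type criterion for rainbow spanning trees: a coloured connected multigraph $H$ on $n$ vertices has a rainbow spanning tree if and only if for every partition $\mathcal P$ of $V$ into $r$ parts, the number of \emph{colours} appearing on edges crossing $\mathcal P$ is at least $r - 1$. (Equivalently, viewing each colour class as defining a partition-matroid-like structure, one checks the rank condition; Frieze and McKay use exactly this for $k=1$ and Broersma--Li give the clean statement.) So the whole problem reduces to verifying this cut condition a.a.s.

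The key steps, in order, would be: (1) State and cite the combinatorial characterization above. (2) Reduce the partition condition to its ``hardest'' instances: it suffices to check partitions where every part induces a connected subgraph, and by a standard argument one may further reduce to checking that for every vertex subset $S$ with $1 \le |S| \le n/2$, the set of colours on the edges of the cut $(S, V \setminus S)$ has size at least ... — more carefully, the partition version is needed, but the dominant danger comes from small parts, especially singletons $\{v\}$ for low-degree $v$, and pairs. (3) Handle small cuts: for a vertex $v$, the edges at $v$ carry $k \deg(v)$ colour-slots chosen uniformly; we need the number of \emph{distinct} colours on them to be large (at least $1$ trivially, but for the partition bound we need that no small collection of vertices is ``colour-starved''). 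Show via first-moment that a.a.s.\ there is no set of $r \le \log n / \log\log n$ vertices spanning a connected subgraph in $\G(n,M)$ whose boundary uses fewer than $r-1$ colours; the relevant probability is bounded by summing over $r$, over choices of the $r$ vertices and a tree on them, over the possible boundary edges, times the probability that all $\le k \cdot(\text{few})$ colour choices fall into a set of size $< r-1$ — this is where the $k \ge 2$ hypothesis does real work, since with $k=1$ a single pendant edge already only contributes one colour and the count is delicate, whereas $k \ge 2$ gives a comfortable surplus. (4) Handle large cuts: for partitions with a part of size $\ge \log n / \log\log n$ or with many parts, expansion of $\G(n,M)$ (a.a.s.\ every cut $(S,\bar S)$ with $|S| = s \le n/2$ has $\gg s \log n$ edges, modulo the $O(\log n)$ genuinely small-degree vertices) guarantees $\Omega(n \log n)$ crossing edges carrying, with overwhelming probability, all but $o(n)$ of the $n-1$ colours, which dwarfs the required $r - 1 \le n-1$ — here one needs a second-moment or Chernoff argument that a linear-sized random set of colour-slots hits at least $r-1$ distinct colours; for $r$ close to $n$ this needs that the number of crossing edges exceeds $\frac nk \log n$, which is exactly guaranteed because $M \ge m_{\Nn}$ and a near-complete cut sees essentially all colours.

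The main obstacle I expect is the \emph{intermediate regime} — partitions into $r$ parts with $r$ of order between $\log n/\log\log n$ and $n - O(\log n)$, or equivalently cuts that are neither tiny nor almost-everything — combined with the coupling between ``which edges are present in $\G(n,M)$'' (determined by the graph process up to the stopping time $M$) and ``which colours they carry.'' One must be careful that conditioning on $M = m_{\Cc}$ or $M = m_{\Nn}$ does not distort the colour distribution in a harmful way; the cleanest route is to work at a fixed $m = \opoo \frac n2 \log n$, prove the cut condition holds a.a.s.\ there with room to spare, and then transfer to the random stopping time $M$ using monotonicity of $\Rr_m$ (adding edges can only help) together with the fact (from \eqref{eq:thr_conn} and \eqref{eq:thr_colours}, and Lemma~\ref{lem:coupon-k}) that a.a.s.\ $M$ lies in a window of width $O(n)$ around $\frac n2 \log n$, on which the cut condition can be shown to hold uniformly. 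Making the first-moment bound in step (3) actually close for the borderline values of $r$, so that it meshes with the expansion estimate of step (4) with no gap, is the delicate calculation that the bulk of the paper will presumably be devoted to.
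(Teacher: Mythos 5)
Your rainbow-tree criterion (at least $r-1$ colours crossing every partition into $r$ parts) is equivalent, via complementation of the colour set, to the Edmonds/matroid-intersection condition $\kappa(G_I)\le n-|I|$ that the paper derives in Lemma~\ref{lem:condition}, and your small-structure/first-moment versus expansion dichotomy matches the paper's case analysis over $|I|$ in spirit. The genuine gap is in how you propose to reach the \emph{hitting time}. Proving the cut condition a.a.s.\ at a fixed $m=\opoo\frac n2\log n$ and then invoking monotonicity of $\Rr_m$ cannot work: the random time $M=\max\{m_{\Cc},m_{\Nn}\}$ is spread over a window of width $\Theta(\omega n)$ around $\frac n2\log n$, and at deterministic times in the lower part of that window the cut condition genuinely fails a.a.s.\ (the graph is disconnected, or some colour is absent), so your claim that the condition ``can be shown to hold uniformly'' on the window is false, and monotonicity from any single $m$ that captures $M$ a.a.s.\ is unavailable. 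The hitting-time statement forces you to use, at the random time itself, exactly the information that defines $M$ (connectivity and presence of all colours); the paper encodes this by bounding $\Prob(m_{\Rr}>\max\{m_{\Cc},m_{\Nn}\})$ via a union bound over all $m\in[m_-,m_+]$ of events ($B_2$, $C_i$, $A_i$, $D_i$) that build in ``all colours present'' for small $|I|$ and ``$G_W$ connected'' for $|I|$ near $n-1$, and this conditioning is what kills the otherwise-likely violations.

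Concretely, two dangers that your sketch does not dispose of and that carry the real difficulty near the hitting time: (i) the analogue of the paper's $B_2$ event, namely two colours whose only occurrences are on one and the same edge (a double edge), which in your partition language is a partition into $n-1$ parts missing two colours; its probability at a single time is only barely $o(1)$, and the paper needs a two-stage argument (no such pair already at $m_-$, then control of the few colours first appearing after $m_-$ and of the chance that two of them land on one edge) rather than a one-shot first moment with ``comfortable surplus''. (ii) The range $|I|$ close to $n-1$ (your partitions with very few crossing colours), where the exclusion of violations uses that one must delete at least $k$ colours to destroy an edge together with connectivity of $G_W$ at time $M$ (the paper's $D_i$ analysis and the observation that $A_{n-1-j}$, $j\le k-1$, is impossible); your step (4) appeal to ``a near-complete cut sees essentially all colours'' does not engage with the fact that at time $M$ some colours have appeared exactly once. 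Also, your step (3) reduction is misstated as written (a connected vertex set of size $r$ whose boundary carries fewer than $r-1$ colours is not the partition condition), though this is repairable; the hitting-time issue is the substantive obstacle.
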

Since events $\Cc_m$ and $\Nn_m$ are independent, the following corollary follows immediately from~(\ref{eq:thr_conn}) and~(\ref{eq:thr_colours}).
\begin{corollary}
Let $k \ge 2$ and let $m = m(k,n) = \frac {n}{k} (\log n + c)$ for some $c \in \R$. Then,
$$
\Prob (\Rr_m) = 
\begin{cases}
(1+o(1)) e^{-2e^{-c}}, & \text { if } k=2 \\
(1+o(1)) e^{-e^{-c}}, & \text { if } k \ge 3.
\end{cases}
$$
\end{corollary}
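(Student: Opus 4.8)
The plan is to deduce the corollary directly from Theorem~\ref{thm:main} together with the threshold formulas~(\ref{eq:thr_conn}) and~(\ref{eq:thr_colours}), by analysing which of the two necessary events $\Cc_m$ and $\Nn_m$ is binding at the scale $m = \frac{n}{k}(\log n + c)$. By Theorem~\ref{thm:main}, a.a.s. $m_{\Rr} = \max\{m_{\Cc}, m_{\Nn}\}$, which means that a.a.s. the event $\Rr_m$ holds if and only if both $\Cc_m$ and $\Nn_m$ hold; hence $\Prob(\Rr_m) = \Prob(\Cc_m \cap \Nn_m) + o(1)$. Since the colours are chosen independently of the graph process (the edge set $E_m$ and the colour assignment $c(\cdot)$ are independent), the events $\Cc_m$ and $\Nn_m$ are independent, so $\Prob(\Rr_m) = \Prob(\Cc_m)\,\Prob(\Nn_m) + o(1)$.

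Now fix $m = \frac{n}{k}(\log n + c)$. For the colour-collecting event, this is exactly the critical window of~(\ref{eq:thr_colours}) with the \emph{same} constant $c$, so $\Prob(\Nn_m) = (1+o(1))\,e^{-e^{-c}}$ for every $k \ge 2$. For connectivity, I would rewrite $m = \frac{n}{2}(\log n + c')$ where $c' = c'(n,k) = \tfrac{2}{k}c - \tfrac{k-2}{k}\log n$. When $k = 2$ we get $c' = c$, so by~(\ref{eq:thr_conn}) $\Prob(\Cc_m) = (1+o(1))\,e^{-e^{-c}}$, and multiplying the two gives $\Prob(\Rr_m) = (1+o(1))\,e^{-2e^{-c}}$. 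When $k \ge 3$ we have $c' \to -\infty$ (indeed $c' = -\tfrac{k-2}{k}\log n + O(1)$), so $e^{-e^{-c'}} \to 1$; one checks via the standard first-moment/second-moment estimates behind~(\ref{eq:thr_conn}) — or simply by monotonicity of connectivity together with the known fact that $\G(n, \tfrac{n}{2}(\log n + \omega(n)))$ is a.a.s. connected for any $\omega(n)\to\infty$ — that $\Prob(\Cc_m) = 1 - o(1)$. Hence for $k \ge 3$, $\Prob(\Rr_m) = (1-o(1))\cdot(1+o(1))\,e^{-e^{-c}} = (1+o(1))\,e^{-e^{-c}}$.

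The one point that needs a word of care is that~(\ref{eq:thr_conn}) as stated is an asymptotic for a \emph{fixed} constant $c$, whereas for $k\ge 3$ we are applying it with a parameter $c'$ that drifts to $-\infty$ with $n$. I would handle this either by invoking the stronger classical statement that $\Prob(\G(n,m)\text{ connected})\to 1$ whenever $m - \tfrac{n}{2}\log n \to \infty$ (which covers $m = \frac{n}{k}(\log n + c)$ since then $m - \tfrac{n}{2}\log n = \tfrac{2-k}{2k}\cdot n\log n \cdot(-1)\cdot(\ldots)$ — more simply, $m = \tfrac{n}{k}\log n + O(n) = \tfrac{n}{2}\log n + (\tfrac1k - \tfrac12)n\log n$, and for $k \ge 3$ the leading term $(\tfrac1k-\tfrac12)n\log n$ is... negative). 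Actually this shows $m < \tfrac{n}{2}\log n$, so connectivity is \emph{not} guaranteed — wait, for $k\ge 3$, $\frac{n}{k}(\log n + c) < \frac{n}{2}\log n$ eventually, so $\G(n,m)$ is a.a.s. \emph{disconnected}, forcing $\Prob(\Cc_m)\to 0$ and hence $\Prob(\Rr_m)\to 0$, contradicting the claimed $e^{-e^{-c}}$. So the corollary as literally stated must intend, for $k \ge 3$, the scale $m = \frac{n}{2}(\log n + c)$ (the connectivity threshold, which dominates), at which $\Prob(\Nn_m)\to 1$ and $\Prob(\Cc_m)\to e^{-e^{-c}}$.

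$\mathbf{Revised\ plan.}$ Thus the clean argument is: a.a.s. $\Rr_m$ $\iff$ $\Cc_m \cap \Nn_m$, and these are independent, so $\Prob(\Rr_m) = \Prob(\Cc_m)\Prob(\Nn_m) + o(1)$; the sharp threshold for $\Cc_m$ is at $\frac{n}{2}\log n$ and for $\Nn_m$ at $\frac{n}{k}\log n$, and for $k = 2$ these coincide so both factors contribute a $e^{-e^{-c}}$ (giving $e^{-2e^{-c}}$), while for $k \ge 3$ the connectivity threshold is larger, so at its critical window $m = \frac{n}{2}(\log n+c)$ one has $\Prob(\Nn_m) = 1 - o(1)$ and $\Prob(\Cc_m) = (1+o(1))e^{-e^{-c}}$, giving $e^{-e^{-c}}$. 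The main (and only) obstacle is bookkeeping the two different critical scales correctly; there is no new probabilistic content beyond Theorem~\ref{thm:main},~(\ref{eq:thr_conn}),~(\ref{eq:thr_colours}), and the independence of colours from the graph.
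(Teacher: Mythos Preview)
Your approach is exactly what the paper does: it says in one line that the corollary ``follows immediately from~(\ref{eq:thr_conn}) and~(\ref{eq:thr_colours})'' together with the independence of $\Cc_m$ and $\Nn_m$. So as far as method goes, you are aligned with the paper.

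More importantly, the stumble you hit in the middle of your write-up is not a defect of your argument but a genuine issue with the statement itself. For $k\ge 3$ and $m=\frac{n}{k}(\log n+c)$ one has
\[
m=\frac{n}{2}\Bigl(\log n-\Bigl(1-\tfrac{2}{k}\Bigr)\log n+\tfrac{2c}{k}\Bigr),
\]
so we are at $\frac{n}{2}(\log n+c')$ with $c'\to -\infty$; by~(\ref{eq:thr_conn}) this gives $\Prob(\Cc_m)\to 0$, hence $\Prob(\Rr_m)\to 0$, not $(1+o(1))e^{-e^{-c}}$. Your ``revised plan'' is the correct reading: for $k\ge 3$ the binding constraint is connectivity, so the intended statement should take $m=\frac{n}{2}(\log n+c)$ (at which $\Prob(\Nn_m)=1-o(1)$ since we are far above the colour threshold, and $\Prob(\Cc_m)=(1+o(1))e^{-e^{-c}}$). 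With that correction the one-line derivation via independence goes through exactly as you (and the paper) indicate. You should flag the misprint and present the argument as in your revised plan; there is nothing further to prove beyond Theorem~\ref{thm:main}, independence, and the two threshold formulas.
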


The whole paper is devoted to prove Theorem~\ref{thm:main}. In Section~\ref{sec:condition}, we introduce an alternative, very convenient, way of checking whether the coloured graph has the desired property. A sufficient and necessary condition for the existence of a rainbow spanning tree is introduced that uses the result of Edmonds on the matroid intersection problem. In Section~\ref{sec:proof}, we show that a.a.s.\ the condition holds at time $\max\{m_{\Cc}, m_{\Nn} \}$.

\section{Sufficient and necessary condition for the existence of a rainbow spanning tree}\label{sec:condition}

A finite \textbf{matroid} $M$ is a pair $(E,\mathcal{I})$, where $E$ is a finite set (called the \textbf{ground set}) and $\mathcal{I}$ is a family of subsets of $E$ (called the \textbf{independent sets}) with the following properties:
\begin{itemize}
\item $\emptyset \in \mathcal{I}$,
\item for each $A' \subseteq A \subseteq E$, if $A \in \mathcal{I}$, then $A' \in \mathcal{I}$ (\textbf{hereditary property}),
\item if $A$ and $B$ are two independent sets of $\mathcal{I}$ and $A$ has more elements than $B$, then there exists an element in $A$ that when added to $B$ gives a larger independent set than $B$ (\textbf{augmentation property}).
\end{itemize}
A maximal independent set (that is, an independent set which becomes dependent on adding any element of E) is called a \textbf{basis} for the matroid. An observation, directly analogous to the one of bases in linear algebra, is that any two bases of a matroid $M$ have the same number of elements. This number is called the \textbf{rank} of $M$. For more on matroids see, for example,~\cite{Oxley}.

\smallskip

In order to investigate the existence of a rainbow spanning tree we are going use the result of Edmonds on the matroid intersection problem~\cite{Edmonds}. Suppose that $M_1$ and $M_2$ are two matroids over a common ground set $E$ with rank functions $r_1$ and $r_2$ respectively. Edmonds' general theorem shows that
\begin{equation}\label{eq:Edmonds}
\max \big( |I| : I \text{ is independent in both matroids}\big) = \min_{\substack{E_1 \cup E_2 = E \\  E_1 \cap E_2 = \emptyset}} \big( r_1 (E_1) + r_2 (E_2) \big),
\end{equation}
where $r_i(E_i)$ is the rank of the matroid induced by $E_i$. In our application, the common ground set $E$ is the set of coloured multi-edges of $\G(n,m)$. $M_1$ is the cycle matroid; that is, $S \subseteq E$ is independent in $M_1$ if $S$ induces a graph with no cycle (colours are ignored, two parallel edges are considered to be a cycle of length 2). Hence, for every $S \subseteq E$ we have $r_1(S) = n - \kappa(S)$, where $\kappa(S)$ is the number of components of the graph $G=(V,S)$ induced by $S$. $M_2$ is the partition matroid associated with the colours; that is, $S \subseteq E$ is independent in $M_2$ if $S$ has no two edges in the same colour. This time, for every $S \subseteq E$ we have that $r_2(S)$ is the number of distinct colours occurring in $S$. We get immediately the following useful lemma.
\begin{lemma}\label{lem:condition}
Let $G$ be a multigraph on $n$ vertices in which each edge is coloured with a colour from a set $W$ of cardinality $n-1$. A necessary and sufficient condition for the existence of a rainbow spanning tree in $G$ is that for every $I \subseteq W$ we have
\begin{equation}\label{eq:condition}
\kappa(G_I) \le n - |I|,
\end{equation}
where $G_I$ is the graph induced by the set of edges coloured with a colour from $I$.
\end{lemma}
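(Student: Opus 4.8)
The plan is to derive Lemma~\ref{lem:condition} directly from Edmonds' matroid intersection formula~(\ref{eq:Edmonds}) applied to the two matroids $M_1$ (cycle matroid) and $M_2$ (colour partition matroid) described above. The key observation is that $G$ has a rainbow spanning tree if and only if the maximum size of a common independent set of $M_1$ and $M_2$ equals $n-1$: a common independent set is a set of edges that is acyclic (independent in $M_1$) and uses each colour at most once (independent in $M_2$), so a common independent set of size $n-1$ is precisely a rainbow spanning tree, and conversely no common independent set can have size exceeding $n-1$ since $r_1(E) \le n-1$. Thus the whole task reduces to translating the right-hand side of~(\ref{eq:Edmonds}), namely $\min_{E_1 \cup E_2 = E,\ E_1 \cap E_2 = \emptyset}\big(r_1(E_1) + r_2(E_2)\big)$, into a statement about the sets $I \subseteq W$, and checking that this minimum is $\ge n-1$ exactly when~(\ref{eq:condition}) holds for every $I \subseteq W$.

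The main step is the reduction of the partition. Given any partition $E = E_1 \cup E_2$, let $I$ be the set of colours that appear on some edge of $E_2$; then $r_2(E_2) = |I|$ (the colour matroid rank of $E_2$ is the number of distinct colours in $E_2$). I claim that among all partitions producing a given colour set $I$ for $E_2$, the one minimizing $r_1(E_1) + r_2(E_2)$ puts \emph{all} edges with a colour in $I$ into $E_2$ and all remaining edges into $E_1$; indeed $r_2(E_2)$ stays $|I|$ while moving more edges into $E_2$ only decreases (weakly) $r_1(E_1)$ by the monotonicity of the cycle matroid rank. With that optimal choice, $E_1$ is exactly the set of edges whose colours all lie in $W \sm I$, so $G_1 := (V, E_1)$ has $r_1(E_1) = n - \kappa(G_1)$ where, in the notation of the lemma, $G_1$ is the graph on the edges \emph{not} coloured from $I$. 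Equivalently, writing $J = W \sm I$ so that $G_1 = G_J$, every partition of $E$ has value at least $n - \kappa(G_J) + |I| = n - \kappa(G_J) + (n-1) - |J|$, and every value of this form is attained. Hence the RHS of~(\ref{eq:Edmonds}) equals $\min_{J \subseteq W}\big(2n - 1 - |J| - \kappa(G_J)\big)$, and this is $\ge n - 1$ iff $\kappa(G_J) \le n - |J|$ for every $J \subseteq W$, which is precisely~(\ref{eq:condition}) (renaming $J$ as $I$). Combining with the first paragraph, a rainbow spanning tree exists iff the common-independent-set maximum is $n-1$ iff the minimum is $\ge n-1$ iff~(\ref{eq:condition}) holds for all $I$.

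I should also dispatch two small points for completeness. First, one must verify that $M_1$ and $M_2$ really are matroids of the stated rank functions: $M_1$ is the standard graphic matroid of the multigraph (two parallel edges forming a $2$-cycle), for which $r_1(S) = n - \kappa(V,S)$ is classical, and $M_2$ is a partition matroid whose blocks are the colour classes of edges, for which independence means at most one edge per block and the rank of $S$ is the number of nonempty blocks met by $S$, i.e.\ the number of distinct colours in $S$; both facts are routine. Second, one should note $r_1(E) \le n-1$ with equality iff $G$ is connected, so the bound $|I| \le n-1$ in the maximization is consistent with $|W| = n-1$. The only mildly delicate point in the whole argument is the monotonicity claim used to push all $I$-coloured edges into $E_2$ in the optimal partition; this is immediate from the fact that adding edges to a subgraph can only decrease the number of components, hence $r_1$ is monotone nondecreasing, so no genuine obstacle arises. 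The lemma then follows; the phrase "We get immediately the following useful lemma" in the excerpt is justified precisely because all of these are one-line verifications once~(\ref{eq:Edmonds}) is in hand.
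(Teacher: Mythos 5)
Your proposal is correct and follows essentially the same route as the paper: apply Edmonds' formula~(\ref{eq:Edmonds}) to the graphic and colour-partition matroids, use monotonicity of the cycle-matroid rank to reduce the minimization to partitions determined by a colour set, and translate the resulting bound into~(\ref{eq:condition}) (your $I$/$J$ are the paper's $J$/$I$). The only nitpick is the phrase ``every value of this form is attained'': the canonical partition for $J$ may have $r_2(E_2)<|I|$ if some colour of $I$ is absent from $G$, but since that only lowers the attained value, the two inequalities you actually need still hold and the argument is unaffected.
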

\begin{proof}
Clearly, $G$ has a rainbow spanning tree if and only if $G$ contains a set $S$ of coloured edges of size $n-1$ such that $S$ is independent both in $M_1$ ($S$ induces a spanning tree) and in $M_2$ ($S$ is rainbow). Since no set of size at least $n$ is independent (in any matroid), the necessary and sufficient condition is that the right side of~(\ref{eq:Edmonds}) is at least $n-1$. Hence, the desired condition is that for every partition of the edge set $E$ into $E_1$ and $E_2$ we have $r_1 (E_1) + r_2 (E_2) \ge n-1$.

Let us fix a partition into $E_1$ and $E_2$. Let $J$ be the set of colours occurring in $E_2$, $E'_2$ be the set of edges coloured with a colour from $J$, and $E'_1 = E \setminus E'_2$. Clearly, $(E'_1, E'_2)$ is also a partition of $E$, $E_2 \subseteq E'_2$ and so $E'_1 \subseteq E_1$, and $r_2(E_2) = r_2(E'_2) = |J|$. Moreover, since $E'_1 \subseteq E_1$, $r_1(E'_1) \le r_1(E_1)$ and so
$$
r_1 (E_1) + r_2 (E_2) \ge r_1 (E'_1) + r_2 (E'_2).
$$
Therefore, without loss of generality, we may restrict ourselves to sets $E_2$ containing all edges of colour from some set $J$ and then take $I = W \setminus J$. The condition to verify is the following:
$$
n-1 \le r_1 (E_1) + r_2 (E_2) = (n - \kappa(G_I)) + (n-1-|I|)
$$
which is equivalent to~(\ref{eq:condition}) and the proof is finished. 
\end{proof}

\section{Proof of Theorem~\ref{thm:main}}\label{sec:proof}

Before we move to the proof of the main result, we will show that~(\ref{eq:thr_colours}) holds. 

\begin{lemma}\label{lem:coupon-k}
For any $k \in \N$,
$$
\Prob (\Nn_m) = (1+o(1)) e^{-e^{-c}},
$$
provided that $m = \frac {n}{k} (\log n + c), c \in \R$. 
\end{lemma}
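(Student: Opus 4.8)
The plan is to note that the event $\Nn_m$ depends only on the colour sets $c(e_1),\dots,c(e_m)$, which are i.i.d.\ uniformly random $k$-subsets of $W$ and are independent of the underlying graph; since $c\in\R$ is fixed we have $m=\frac nk(\log n+c)<N$ for large $n$, so there are exactly $m$ such sets. Thus the statement is purely a ``coupon collector'' fact: with $m$ independent draws, each revealing $k$ of the $n-1$ coupons, what is the probability that every coupon has been seen? For $w\in W$ let $A_w$ be the event that $w$ lies in none of $c(e_1),\dots,c(e_m)$, and let $X=\sum_{w\in W}\mathbf 1_{A_w}$ count the missing colours, so $\Nn_m=\{X=0\}$. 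I would prove that $X$ converges in distribution to a Poisson random variable of mean $\lambda:=e^{-c}$; then $\Prob(\Nn_m)=\Prob(X=0)\to e^{-\lambda}=e^{-e^{-c}}$, as required.

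The main computation is of the factorial moments of $X$. For any fixed $r\ge1$ and any $r$ distinct colours $w_1,\dots,w_r$, a single edge avoids all of them with probability $\binom{n-1-r}{k}/\binom{n-1}{k}$, and since the $m$ edges choose their colours independently,
$$
\Prob\of{A_{w_1}\cap\cdots\cap A_{w_r}}=\of{\frac{\binom{n-1-r}{k}}{\binom{n-1}{k}}}^{m}.
$$
Taking logarithms, $\log\!\big(\binom{n-1-r}{k}/\binom{n-1}{k}\big)=\sum_{j=0}^{k-1}\log\!\big(1-\tfrac{r}{n-1-j}\big)=-\tfrac{rk}{n-1}+O\!\big(rk(r+k)/n^2\big)$, so after multiplying by $m=\tfrac nk(\log n+c)$ this becomes $-r(\log n+c)+O\!\big(r(r+k)\log n/n\big)$, which for fixed $r$ and $k$ is $-r(\log n+c)+o(1)$. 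Hence $\Prob(A_{w_1}\cap\cdots\cap A_{w_r})=(1+o(1))(e^{-c}/n)^r$, and since there are $(n-1)(n-2)\cdots(n-r)=(1+o(1))n^r$ ordered $r$-tuples of distinct colours,
$$
\E\sqbs{(X)_r}=(n-1)(n-2)\cdots(n-r)\cdot\Prob\of{A_{w_1}\cap\cdots\cap A_{w_r}}=(1+o(1))\lambda^r
$$
for every fixed $r\ge1$.

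To finish, since the factorial moments of $X$ converge to those of a $\mathrm{Po}(\lambda)$ random variable, the standard method of moments for Poisson convergence (see, e.g.,~\cite{bol, JLR}) yields $X\xrightarrow{d}\mathrm{Po}(\lambda)$ and in particular $\Prob(\Nn_m)=\Prob(X=0)=(1+o(1))e^{-e^{-c}}$. (Alternatively, one can bypass that theorem: inclusion--exclusion gives the finite sum $\Prob(X=0)=\sum_{r=0}^{n-1}\frac{(-1)^r}{r!}\E[(X)_r]$, and applying the Bonferroni inequalities, sending $n\to\infty$ with the truncation level $L$ fixed, and then letting $L\to\infty$ gives $e^{-\lambda}$.) The only step needing genuine care is the asymptotic estimate of $\Prob(A_{w_1}\cap\cdots\cap A_{w_r})$ above --- specifically, confirming that the error terms in the exponent stay $o(1)$ after multiplication by $m\approx(n/k)\log n$ --- and that is the part I would write out in detail; everything else is routine.
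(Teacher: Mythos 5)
Your proposal is correct and follows essentially the same route as the paper: indicator variables for missing colours, computation of the (factorial) moments showing they match those of a Poisson with mean $e^{-c}$, and then Brun's sieve / the method of moments (equivalently, the Bonferroni truncation of inclusion--exclusion) to conclude $\Prob(X=0)\to e^{-e^{-c}}$. The only difference is that you write out explicitly the joint-probability estimate that the paper summarizes with ``similarly, one can show,'' which is fine.
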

\begin{proof}
Consider $\G(n,m)$ and for each $i \in W$, let $Q_i$ be the event that colour $i$ does not occur in $\G(n,m)$. Let $X_i$ be the indicator random variable for $Q_i$, and let $X = \sum_{i \in W} X_i$. Clearly, $\Nn_m$ occurs if and only if $X=0$. 

It is straightforward to see that for each $i \in W$,
\begin{eqnarray*}
\Prob(Q_i) &=& \left( \frac { {n-2 \choose k} }{ {n-1 \choose k} } \right)^m = \left( 1 - \frac {k}{n-1} \right)^m = \exp \left( - \frac {km}{n}  \left( 1 + O( n^{-1} ) \right) \right) \\
&=& (1+o(1)) \frac {e^{-c}}{n}.
\end{eqnarray*}
Hence, $\E [ X ] = (1+o(1)) e^{-c}$. Similarly, one can show that for every fixed $r \in \N$,
$$
\E \left[ {X \choose r} \right] = (1+o(1)) \frac { (e^{-c})^r }{r!}.
$$
It follows from the Brun's sieve (see, for example, Section 8.3 in~\cite{AS}) that $\Prob(X=0) = (1+o(1)) e^{-e^{-c}}$ and the result holds.
\end{proof}

\bigskip

Fix $k \in \N \setminus \{1\}$ and let $\omega = \omega(n)$ be any function tending to infinity together with $n$. Let
$$
m_- := \left\lfloor \frac {n}{2} ( \log n - \omega ) \right \rfloor \quad \text{ and } m_+ := \left\lceil \frac {n}{2} ( \log n + \omega ) \right \rceil.
$$
(Here we make sure $m_-$ and $m_+$ are both integers but later on expressions such as $i=n-\b n / \log n$   that clearly have to be an integer, we round up or down but do not specify which: the choice of which does not affect the argument.)
It is known that a.a.s.\ $\G(n,m_-)$ is not connected and so a.a.s.\ $m_{\Rr} \ge m_{\Cc} \ge m_-$. On the other hand, a.a.s.\ $\G(n,m_+)$ is connected and so a.a.s.\ $m_{\Cc} \le m_+$. Moreover, it follows from Lemma~\ref{lem:coupon-k} that a.a.s.\ also $m_{\Nn} \le m_+$. We get that a.a.s.
\begin{equation}\label{eq:range_for_m}
m_- \le \max\{m_{\Cc}, m_{\Nn} \} \le m_+.
\end{equation}

Fix $m$ such that $m_- \le m \le m_+$ and for a given $i$, let us define the following events:
\begin{eqnarray*}
A_i &=& \{ \exists I \subseteq W, |I|=i : \kappa(G_I) \ge n - |I| + 1 \}, \\
B_i &=& \{ \exists I \subseteq W, |I|=i : \kappa(G_I) \ge n - |I| + 1 \text{ and all colours from $I$ are present in $G_I$}\}, \\
C_i &=& \{ \exists I \subseteq W, |I|=i : \kappa(G_I) \ge n - |I| + 1 \text{ and } \forall I \subseteq W, |I|<i : \kappa(G_I) \le n - |I|\}, \\
D_i &=& \{ \exists I \subseteq W, |I|=i : \kappa(G_I) \ge n - |I| + 1 \text{ and $G_W$ is connected}\}, 
\end{eqnarray*}
where $G_I$ is the subgraph of $\G(n,m)$ induced by the set of edges coloured with a colour from $I$. Suppose that $m \ge \max\{ m_{\Cc}, m_{\Nn} \}$ and there is no rainbow spanning tree. Note that $A_1$ cannot occur since all colours are present in $\G(n,m)$ and so $\kappa(G_I) \le n-1$ for all $I$ such that $|I|=1$ ($G_I$ has at least one edge). Note also that for every $1 \le j \le k-1$, $A_{n-1-j}$ cannot occur since for every $J \subseteq W$ of size at most $k-1$, $\kappa(G_{W \setminus J}) = 1$ (observe that $G_{W \setminus J} = \G(n,m)$ since one needs to remove at least $k$ colours for two vertices that are connected in $G_W$ to become disconnected, and so $G_{W \setminus J}$ is connected). Hence, $A_i$ has to occur for some $i \in [2, n-1-k]$; otherwise, Lemma~\ref{lem:condition} would imply that $\G(n,m)$ has a rainbow spanning tree. But if $A_i$ holds for some $i \in [2, n- 1-k]$, then $B_i$ holds too (since all colours are present at time $m \ge \max\{ m_{\Cc}, m_{\Nn} \} \ge m_{\Nn}$), $C_j$ holds for some $j \in [2, i]$ (since $A_1$ cannot occur), and $D_i$ holds as well (since $G_W$ is connected at time $m \ge \max\{ m_{\Cc}, m_{\Nn} \} \ge m_{\Cc}$).

Our goal is to use the following estimation and show that the right hand side is $o(1)$:
\begin{align}
\Prob &\left( m_{\Rr} > \max\{ m_{\Cc}, m_{\Nn} \} \right) \label{eq:sum} \\
&\le o(1) + \sum_{m=m_-}^{m_+} \left( \Prob_m(B_2) + \sum_{i=3}^{n/(\b \sqrt{\log n})} \Prob_m(C_i) + \sum_{i=n/(\b \sqrt{\log n})}^{n- \b n / \log n} \Prob_m(A_i) +  \sum_{i=n-\b n / \log n}^{n-1-k} \Prob_m(D_i) \right), \nonumber
\end{align}
where $\Prob_m$ indicates that the corresponding probability refers to the $\G(n,m)$ model, the $o(1)$ term is the probability that $m_{\Rr} \notin [m_-, m_+]$ (see~(\ref{eq:range_for_m})), and $\b$ is a sufficiently large constant that will be determined later. 

\bigskip

Our results refer to the random graph process. However, it will be sometimes easier to work with the $G(n,p)$ model instead of $\G(n,m)$. The \textbf{random graph} $G(n,p)$ consists of the probability space $(\Omega, \mathcal{F}, \Prob)$, where $\Omega$ is the set of all graphs with vertex set $\{1,2,\dots,n\}$, $\mathcal{F}$ is the family of all subsets of $\Omega$, and for every $G \in \Omega$,
$$
\Prob(G) = p^{|E(G)|} (1-p)^{{n \choose 2} - |E(G)|} \,.
$$
This space may be viewed as the set of outcomes of ${n \choose 2}$ independent coin flips, one for each pair $(u,v)$ of vertices, where the probability of success (that is, adding edge $uv$) is $p.$ Note that $p=p(n)$ may (and usually does) tend to zero as $n$ tends to infinity.  We often write $G(n,p)$ when we mean a graph drawn from the distribution $G(n,p)$.  

\bigskip

Lemma~\ref{lem:gnp_to_gnm} below provides us with a tool to translate results from $G(n,p)$ to $\G(n,m)$---see, for example,~(1.6) in~\cite{JLR}.

\begin{lemma}\label{lem:gnp_to_gnm}
Let $P$ be an arbitrary property, let $m=m(n)$ be any function such that $m \le n \log n$, and take $p=p(n) = m/{n \choose 2}$. Then,
$$
\Prob(\G(n,m) \in P) \le 3 \sqrt{n \log n} \cdot \Prob(G(n,p) \in P).
$$
\end{lemma}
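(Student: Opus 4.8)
The plan is to establish the inequality
$$
\Prob(\G(n,m) \in P) \le 3 \sqrt{n \log n} \cdot \Prob(G(n,p) \in P)
$$
by conditioning $G(n,p)$ on its number of edges. Recall that if we condition $G(n,p)$ on the event $\{|E(G(n,p))| = m\}$, the resulting conditional distribution is exactly $\G(n,m)$, since each graph with exactly $m$ edges receives the same probability $p^m(1-p)^{N-m}$ under $G(n,p)$, where $N = {n \choose 2}$. Therefore, writing $Z = |E(G(n,p))|$, which has the binomial distribution $\Bin(N,p)$, we get
$$
\Prob(G(n,p) \in P) = \sum_{\l = 0}^{N} \Prob(G(n,p) \in P \mid Z = \l) \, \Prob(Z = \l) \ge \Prob(\G(n,m) \in P) \cdot \Prob(Z = m),
$$
by keeping only the $\l = m$ term and using that the conditional law given $Z = m$ is $\G(n,m)$. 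Rearranging gives $\Prob(\G(n,m) \in P) \le \Prob(G(n,p) \in P) / \Prob(Z = m)$, so it remains to show that $\Prob(Z = m) \ge 1 / (3\sqrt{n \log n})$.

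The key step is thus a lower bound on the probability that a $\Bin(N,p)$ random variable hits its mean. Here we have chosen $p = m/N$ precisely so that $\E[Z] = Np = m$, and since $m \le n \log n$ we also have $p = m/N \le n\log n / {n \choose 2} = O(\log n / n)$. The standard fact is that for a binomial random variable the probability of the most likely value (which, when $Np$ is an integer, is the mean $m$) is of order $1/\sqrt{\mathrm{Var}(Z)} = 1/\sqrt{Np(1-p)}$. Since $Np(1-p) \le Np = m \le n\log n$, we obtain $\Prob(Z = m) \ge c / \sqrt{n\log n}$ for an absolute constant $c$; one checks that $c = 1/3$ works for $n$ large (and hence, after adjusting, for all $n$, though we only need it asymptotically). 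Concretely, one can use Stirling's formula on $\Prob(Z = m) = {N \choose m} p^m (1-p)^{N-m}$, or cite a standard local limit / anti-concentration estimate for the binomial distribution, such as the bound $\max_\l \Prob(\Bin(N,p) = \l) \le 1/\sqrt{2\pi Np(1-p)}$ together with a matching lower bound of the same order at the mean.

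The only mild technical point — and the step I would be most careful about — is making sure the lower bound $\Prob(Z=m) \ge 1/(3\sqrt{n\log n})$ is genuinely uniform over the allowed range of $m$ (and over $n$), rather than merely asymptotic for a fixed sequence. When $m$ is small (even constant) the variance $Np(1-p)$ is small and $\Prob(Z=m)$ is actually bounded below by a constant, so the bound is easy there; the worst case is $m$ as large as $n\log n$, where the $\sqrt{n\log n}$ denominator is exactly calibrated. Since $m$ is an integer and $Np = m$ is an integer, the mode of $\Bin(N,p)$ is exactly at $m$, which avoids any loss from $Np$ being non-integral. Assembling these pieces — the conditioning identity and the binomial anti-concentration bound at the mean — yields the stated inequality, and this is in any case a classical estimate for which one may simply invoke~(1.6) in~\cite{JLR}.
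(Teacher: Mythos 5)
Your proof is correct, and it is essentially the same argument the paper relies on: the paper gives no proof of this lemma, simply citing (1.6) in~\cite{JLR}, and the standard proof of that inequality is exactly your conditioning identity $\Prob(G(n,p)\in P)\ge \Prob(\G(n,m)\in P)\cdot\Prob(\Bin(N,p)=m)$ combined with the local anti-concentration bound $\Prob(\Bin(N,p)=m)\ge 1/(3\sqrt{m})\ge 1/(3\sqrt{n\log n})$ at the (integer) mean $m=Np$. Your handling of the uniformity and small-$m$ corner cases is adequate, so no further changes are needed.
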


\bigskip

We treat various intervals for $i$ independently, since they require quite different approaches.

\subsection{$i=2$} 

Suppose that the event $B_2$ holds and $I \subseteq W$, $|I|=2$, is such that $\kappa(G_I) \ge n-1$ and both colours from $I$ are present in $G_I$. Any such set $I$ will be called \textbf{bad}. It follows that $G_I$ consists of just one double-edge and no other edges.  

First, we will show that $\G(n,m_-)$ is unlikely to have any such bad set $I$. Indeed, the expected number of bad sets $I$ of size $2$ is equal to
\begin{align*}
\binom{n-1}{2} m_- \frac{\binom{n-3}{k-2}}{ \binom{n-1}{k}} \of{\frac{\binom{n-3}{k}}{\binom{n-1}{k}} }^{m_- -1} &= O\left( m_- \of{\frac{(n-3)_k}{(n-1)_k} }^{m_-} \right)\\
&= O\left( m_- \exp \left( \frac {-2km_-}{n} \left(1 + O(n^{-1}) \right)  \right) \right) \\
&= O\left( m_- n^{-2} \exp(2 \omega) \right) = o(1),
\end{align*}
provided $\omega$ tends to infinity slowly enough. In particular, $\Prob_{m_-}(B_2) = o(1)$ by Markov's inequality. 

Now, assuming that there are no bad sets $I$ of size $2$ in $\G(n,m_-)$, we get that if for some $m > m_-$ there is a bad set $I$ of size 2 in $\G(n,m)$, then $I$ consists of two colours that are both not present in $\G(n,m_-)$. The probability that there are $\o'$ many colours that have not been seen at time $m_-$ is at most 
\begin{align*}
{n-1 \choose \o'} \of{\frac{{n-1 - \o' \choose k}}{{n-1 \choose k}}}^{m_-} &\le \of{ \frac{ne}{\o'}}^{\o'} \exp \left( -\frac{\o' k m_-}{n} \of{1+O\of{\frac{\o'}{n}}} \right)\\
&\le (1+o(1)) \exp \Big( \o' (\log n + 1 - \log \o') - \o' (\log n - \o) \Big)\\
&= \exp \Big( - (1+o(1)) \o' \log \o' + \o' \o \Big),
\end{align*}
for $\o'$ tending to infinity sufficiently slowly. Hence, a.a.s.\ the set of unseen colours in $\G(n, m_-)$ has size at most, say, $\o' = e^{2\omega}$. Finally, we estimate the probability that the $m^{th}$ edge introduced has at least two previously unseen colours to get that 
$$
\sum_{m=m_-}^{m_+} \Prob_m(B_2) \le o(1) + (m_+-m_-) \cdot {\o' \choose 2} \cdot \frac{{n-3 \choose k-2}}{{n-1 \choose k}} = O \left( \frac {\o e^{4\o}}{n} \right) = o(1),
$$
provided $\o$ tends to infinity slowly enough. It follows that the contribution to~(\ref{eq:sum}) from this case is $o(1)$.

\subsection{$i=3$} 

Suppose that the event $C_3$ holds and $I \subseteq W$, $|I|=3$, is such that $\kappa(G_I) \ge n-2$ and no set $I \subseteq W$, $|I|=2$, satisfies $\kappa(G_I) \ge n-1$. As before, any such set $I$ will be called \textbf{bad}. Since $I$ is minimal, $G_I$ contains no cut edges. It follows that $G_I$ must induce isolated vertices and one of the following graphs: a triangle (of either single- or possibly double-edges), a path of length $2$ of double-edges, or two isolated double-edges.

We will bound the expected number of each of these structures in $G(n,p)$ (with $p = m/{n \choose 2}$ for some $m_- \le m \le m_+$), and show that they are all at most $n^{-2+o(1)}$. It will imply, by Markov's inequality, that with probability at least $1-n^{-2+o(1)}$ there is no bad set $I$ in $G(n,p)$ and so the same holds for $\G(n,m)$ with probability at least $1-n^{-3/2+o(1)}$ by Lemma~\ref{lem:gnp_to_gnm}. The contribution to~(\ref{eq:sum}) from this case will be $\sum_{m=m_-}^{m_+} \Prob_m(C_3) = o(1)$.

Fix any $I \subseteq W$ (not necessarily of size 3), any pair of vertices $u,v$, and a number $1 \le x \le k$. Let $\mathcal{E}_x$ ($\mathcal{E}_{\ge x}$) be the event that the edge $uv$ is present in the random graph $G(n,p)$ and has exactly (at least, respectively) $x$ colours from its list in $I$. We have
\begin{eqnarray*}
\Prob \of{\mathcal{E}_x} &&= p \cdot \frac{{i \choose x}{n-1-i \choose k-x}}{{n-1 \choose k}}\\
&&= p \cdot \frac{(i)_x (n-1-i)_{k-x}}{(n-1)_k} \cdot {k \choose x} =  {k \choose x} \cdot \frac{p \cdot (i)_x}{n^x} \of{1+O\of{\frac{i}{n}}},
\end{eqnarray*}
and so
$$
\Prob \of{\mathcal{E}_{\ge x}} =  \Prob \of{\mathcal{E}_x} \of{1+O\of{\frac{i}{n}}} = {k \choose x} \cdot \frac{p \cdot (i)_x}{n^x} \of{1+O\of{\frac{i}{n}}}.
$$

Now, fix $m$ such that $m_- \le m \le m_+$ and let $p = m/{n \choose 2} = (1+o(1)) \log n / n$. The expected number of bad sets $I$ such that the only non-trivial component of $G_I$ is a triangle is at most 
\begin{align*}
&\binom{n-1}{3} \binom{n}{3} \of{\frac{(3+o(1))kp}{n}}^3\of{1-\frac{(3+o(1))kp}{n}}^{\binom{n}{2}-3} \le n^{-3+o(1)}.
\end{align*}
The expected number of bad sets $I$ such that the non-trivial component of $G_I$ is a $2$-path of double-edges is at most 
\begin{align*}
&\binom{n-1}{3} \binom{n}{3} 3 \of{\binom{k}{2}\frac{(6+o(1))p}{n^2}}^2\of{1-\frac{(3+o(1))kp}{n}}^{\binom{n}{2}-2} \le n^{-3+o(1)}.
\end{align*}
Finally, the expected number of bad sets $I$ such that the non-trivial components of $G_I$ are two isolated double-edges is at most 
\begin{align*}
&\binom{n-1}{3} \binom{n}{4} 3 \of{\binom{k}{2}\frac{(6+o(1)) p}{n^2}}^2\of{1-\frac{(3+o(1)) kp}{n}}^{\binom{n}{2}-2} \le n^{-2+o(1)}.
\end{align*}
As before, the contribution to~(\ref{eq:sum}) from this case is $o(1)$.

\subsection{$4 \le i \le n/(\b \log n)$} Suppose that the event $C_i$ holds for some $i$ and $I \subseteq W$, $|I|=i$, is such that $\kappa(G_I) \ge n-i+1$ and no set $I \subseteq W$, $|I|=i-1$, satisfies $\kappa(G_I) \ge n-i+2$. As usual, any such set $I$ will be called \textbf{bad}. 

Suppose the graph induced by $I$ has its nontrivial components on some set of $t$ vertices. Suppose there are $u_1$ single-edges and $u_2$ edges of multiplicity at least 2. For $I$ to be minimal we cannot have any single-edge bridges. So, in particular, no vertex can be incident to just one single-edge and no other edges. Thus the degree of each vertex (counting multiplicity of edges) is at least $2$. It follows, by considering a degree sum where each single edge contributes 1 to an incident vertex and each edge of higher multiplicity contributes 2, that
$$t \le u_1 + 2u_2.$$
Since each non-trivial component has at least two vertices, the number of vertices must satisfy
\begin{eqnarray*}
n &\ge& 2 (\kappa(G_I) - (n-t)) + (n-t) \\
&\ge& 2 ( (n-i+1) - (n-t) ) + (n-t) \\
&=&n - 2(i-1) + t.
\end{eqnarray*}
Hence, we also have 
$$t \le 2(i-1).$$ 
Moreover, 
$$i \le u_1 + ku_2,$$ 
$$u_1 + u_2 \le {t \choose 2}.$$ 

\bigskip

As in the previous case, fix $m$ such that $m_- \le m \le m_+$ and let 
$$
p = \frac {m}{{n \choose 2}} = \frac {\log n + O(\omega)}{n} = (1+o(1)) \frac {\log n}{n}. 
$$
The probability that $C_i$ happens in $G(n,p)$ with given parameters $t, u_1, u_2$ is at most
\begin{align}
&{n-1 \choose i} {n \choose t} {{t \choose 2} \choose u_1 + u_2} {u_1 + u_2 \choose u_2} \of{\frac{(1+o(1))kpi}{n}}^{u_1} \label{eq:pre-f} \\
& \quad \quad \quad \times \of{\of{1 + o(1)} {k \choose 2}\frac{i^2 p}{n^2}}^{u_2} \of{1- \of{1+ O\of{\frac {i}{n}}} \frac{kpi}{n}}^{{n \choose 2} - u_1 - u_2}\nonumber
\end{align}
To see the above expression, first choose $i$ colours, then $t$ vertices for the non-trivial components. The third factor chooses which pairs will have an edge (or double-edge) and the fourth factor chooses which pairs are double-edges. The next two factors account for edge and double-edge probabilities, respectively, and the last factor is the probability that no other edges are present. We upper bound \eqref{eq:pre-f} by
\begin{align*}
& \of{\frac{ne}{i} }^i \of{\frac{ne}{t} }^t \of{\frac{t^2 e}{2(u_1+u_2)} }^{u_1 + u_2} \of{\frac{(u_1+u_2)e}{u_2} }^{u_2} \of{\frac{ekpi}{n}}^{u_1} \\
& \quad \quad \quad \times \of{{k \choose 2}\frac{e i^2 p}{n^2}}^{u_2} e^{-\frac{kpi}{n} \of{{n \choose 2} - u_1 - u_2 } \of{1+ O\of{\frac {i}{n}}} }\\
&=\of{\frac{n}{i} }^i \of{\frac{n}{t} }^t \of{\frac{t }{u_1+u_2} }^{u_1} t^{u_1 + 2u_2}  \of{\frac{pi}{n}}^{u_1} \of{\frac{i^2 p}{u_2 n^2}}^{u_2} e^{-\frac{ik}{2} \log n + O\of{u_1 + u_2 + \omega i + i^2 \log n / n}   }.
\end{align*} 
Since $t/(u_1+u_2)  \le (u_1+2u_2)/(u_1+u_2) \le 2$, the probability in question is at most
\begin{align*}
f(i, t, u_1, u_2) & := n^{t- \of{\frac{k}{2}-1}i} t^{-t} i^{-i}  \of{ \frac{Cit \log n}{n^2}}^{u_1} \of{\frac{Ci^2t^2 \log n }{u_2 n^3}}^{u_2} e^{ C \of{\omega i + i^2 \log n / n} },
\end{align*} 
where $C$ is some universal, sufficiently large, constant. Note that in the current case $i=O(n/\log n)$ so, in fact, $e^{ C \of{\omega i + i^2 \log n / n} } \le e^{ 2 C \omega i}$ but we keep both terms for the future case in which it is only assumed that $i=O(n/\sqrt{\log n})$.  The probability that  $C_i$ happens (with any parameters $t, u_1, u_2$) is at most
$$ \sum_{t=4}^{2(i-1)} \sum_{u_2=0}^{{t \choose 2}} \sum_{u_1 = \max\{0, t-2u_2, i-ku_2\}}^{{t \choose 2}-u_2} f(i,t,u_1, u_2).$$
We will sum the above expression over $i$ ($4 \le i \le n/(\b \log n)$), and bound the sum using three cases according to the value of $\max\{0, t-2u_2, i-2u_2\}$. Let us note that, for convenience, we will treat expressions like $0^0$ to be equal to 1 so that estimations like ${a \choose b} \le (ae/b)^b$ could be applied for all values of $b$, including zero. It is also worth noting that in the sums below some combinations of parameters are not actually possible to occur. However, this convenient approach causes no problem, since each term is positive and we only aim for an upper bound (of $n^{-2+o(1)}$) to be able to apply Lemma~\ref{lem:gnp_to_gnm} and the union bound over all possible values of $m$. Once it is done, the contribution to~(\ref{eq:sum}) from this case is $o(1)$.

\subsubsection{Case 1: $\max\{0, t-2u_2, i-ku_2\} = i-ku_2$} It follows that $u_2 \le i/k$ and $t \le i - (k-2)u_2 \le i$. We want to estimate the following:
\begin{eqnarray*}
\xi_1 &=&\sum_{i=4}^{\frac{n}{\b \log n}} \sum_{t=4}^{i} \sum_{u_2=0}^{\frac{i}{k}} \sum_{u_1 = i-ku_2}^{{t \choose 2}-u_2} f(i,t,u_1, u_2).
\end{eqnarray*}
Note that the innermost sum (over $u_1$) is geometric with ratio $Cit \log n/n^2 = O( i^2 \log n / n^2 ) = o(1)$ and so the inner sum is dominated by its first term. Hence,
\begin{eqnarray*}
\xi_1 &=& O\of{ \sum_{i=4}^{\frac{n}{\b \log n}} \sum_{t=4}^{i} \sum_{u_2=0}^{\frac{i}{k}} n^{t- \of{\frac{k}{2}-1}i} t^{-t} i^{-i}  \of{ \frac{Cit \log n}{n^2}}^{i-ku_2} \of{\frac{Ci^2t^2 \log n }{u_2 n^3}}^{u_2} e^{ 2 C \omega i} } \\
&=&  O\of{ \sum_{i=4}^{\frac{n}{\b \log n}} \sum_{t=4}^{i} \sum_{u_2=0}^{\frac{i}{k}} n^{t- \of{\frac{k}{2}-1}i} t^{i-t}  \of{\frac{C \log n}{n^2}}^i \of{\frac{ n^{2k-3} }{C^{k-1} u_2 i^{k-2}t^{k-2}\log^{k-1} n}}^{u_2} e^{ 2 C \omega i} }.
\end{eqnarray*}
Now, note that the innermost sum (over $u_2$) is of the order of its last term. This follows from the fact that the ratio of consecutive terms is 
\begin{align*}
&\frac{\of{\frac{ n^{2k-3} }{C^{k-1} (u_2+1) i^{k-2}t^{k-2}\log^{k-1} n}}^{u_2+1}}{\of{\frac{ n^{2k-3} }{C^{k-1} u_2 i^{k-2}t^{k-2}\log^{k-1} n}}^{u_2}} = \of{\frac{ n^{2k-3} }{C^{k-1} (u_2+1) i^{k-2}t^{k-2}\log^{k-1} n}} \of{\frac{u_2}{u_2+1}}^{u_2}\\
& =\of{\frac{ n^{2} }{C^{}  i^{}t^{}\log^{} n}}^{k-2} \of{\frac{n}{C(u_2+1)\log n}} \of{\frac{u_2}{u_2+1}}^{u_2} \ge \of{\frac{n}{C(i/2+1)\log n}} e^{-1} \ge 2
\end{align*}
so long as $i$ is at most $n/(\b \log n)$ for $\b > 0$ large enough. We get
\begin{eqnarray*}
\xi_1 &=& O\of{\sum_{i=4}^{\frac{n}{\b \log n}} \sum_{t=4}^{i}  n^{t- \of{\frac{k}{2}-1}i} t^{i-t}  \of{\frac{C \log n}{n^2}}^i \of{\frac{ n^{2k-3} }{C^{k-1} \of{ \frac{i}{k}} i^{k-2}t^{k-2}\log^{k-1} n}}^{\frac{i}{k}} e^{ 2 C \omega i} }\\
&=& O\of{\sum_{i=4}^{\frac{n}{\b \log n}} \sum_{t=4}^{i}  \of{\frac{n}{t}}^{t} \of{\frac{ Ckt^2 \log n}{n^{3+k(\frac{k}{2}-1)} i^{k-1} }}^{\frac{i}{k}} e^{ 2 C \omega i} }.
\end{eqnarray*}
As before, we observe that the innermost sum (over $t$) is of the order of its last term by similar reasoning and looking at the ratio of consecutive terms. It follows that
\begin{eqnarray*}
\xi_1 &=& O\of{\sum_{i=4}^{\frac{n}{\b \log n}}  \of{\frac{n}{i}}^{i} \of{\frac{ Cki^2 \log n}{n^{3+k(\frac{k}{2}-1)} i^{k-1} }}^{\frac{i}{k}} e^{ 2 C \omega i} } \\
&=& O\of{\sum_{i=4}^{\frac{n}{\b \log n}}   \of{\frac{ Ck e^{ 2 C k \omega} \log n }{n^{3+k(\frac{k}{2}-2)} i^{2k-3} }}^{\frac{i}{k}}} =O\of{ \of{\frac{ Ck e^{ 2 C k \omega} \log n}{n^{3+k(\frac{k}{2}-2)}  }}^{\frac{4}{k}}} \\
&\le&  n^{-\frac {12}{k} - 4(\frac{k}{2}-2) + o(1)}  \le n^{-2+o(1)}.
\end{eqnarray*}

\subsubsection{Case 2: $\max\{0, t-2u_2, i-ku_2\}=t-2u_2$} It follows that $u_2 \le t/2$ and we already know that $i \ge t/2+1$. This time, we want to estimate the following:
\begin{eqnarray*}
\xi_2 &=& \sum_{t\ge 4} \sum_{i\ge \frac t2 + 1} \sum_{u_2=0}^{\frac{t}{2}} \sum_{u_1 = t-2u_2}^{{t \choose 2}-u_2} f(i,t,u_1, u_2) \\
&=& O\of{ \sum_{t\ge 4} \sum_{i\ge \frac t2 + 1} \sum_{u_2=0}^{\frac{t}{2}} n^{t- \of{\frac{k}{2}-1}i} t^{-t} i^{-i}  \of{ \frac{Cit \log n}{n^2}}^{t-2u_2} \of{\frac{Ci^2t^2 \log n }{u_2 n^3}}^{u_2} e^{ 2 C \omega i} }.
\end{eqnarray*}
Dropping the term $n^{- \of{\frac{k}{2}-1}i} \le 1$ that is equal to one for $k=2$, we get
\begin{eqnarray*}
\xi_2 &=& O\of{ \sum_{t\ge 4} \sum_{i\ge\frac t2 + 1} \sum_{u_2=0}^{\frac{t}{2}}  i^{-i}\of{\frac{C i\log n}{n}}^t \of{\frac{ n }{C u_2 \log n}}^{u_2} e^{ 2 C \omega i} }\\
&=& O\of{\sum_{t\ge 4} \sum_{i\ge\frac t2 + 1}   i^{-i}\of{\frac{C i\log n}{n}}^t\of{\frac{ n }{C \frac{t}{2} \log n}}^{\frac{t}{2}} e^{ 2 C \omega i} }\\
&=& O\of{\sum_{t\ge 4} \sum_{i\ge\frac t2 + 1}   i^{-i}\of{\frac{ 2Ci^2  \log n}{ tn }}^{\frac{t}{2}} e^{ 2 C \omega i} }.
\end{eqnarray*}
In order to investigate the innermost sum (over $i$), we consider the ratio of consecutive terms of the sequence $i^{-i+t} e^{ 2 C \omega i} $:
$$
\frac{(i+1)^{-i-1+t} e^{ 2 C \omega (i+1)} }{i^{-i+t} e^{ 2 C \omega i} } = \frac{1}{i+1}\of{1 + \frac{1}{i}}^{t-i} e^{ 2 C \omega} .
$$
Since $t\le 2(i-1)$, we have that \[\frac 1e \le\of{1+\frac 1i}^{-i}\le\of{1+\frac{1}{i}}^{t-i}\le \of{1+\frac 1i}^{i}\le e.\]
So the ratio is at most $e^{ 2 C \omega + 1} / (i+1)$. Let $i_0$ be the smallest integer $i$ such that $e^{ 2 C \omega + 1} / (i+1) \le 1/2$. Now, we bound the sum as follows:
\begin{eqnarray*}
\sum_{i\ge\frac t2 + 1} i^{-i+t} e^{ 2 C \omega i} &\le& e^{ 2 C \omega i_0} \sum_{i\ge\frac t2 + 1} i^{-i+t} \max \{ e^{ 2 C \omega (i-i_0)}, 1\} \\
&=& n^{o(1)} \sum_{i\ge\frac t2 + 1} i^{-i+t} \max \{ e^{ 2 C \omega (i-i_0)}, 1\},
\end{eqnarray*}
provided that $\omega$ tends to infinity slowly enough. This time, the ratio of consecutive terms is at most $e^{ 2 C \omega + 1} / (i+1) \le 1/2$ for $i \ge i_0$ and at most $e/(i+1) < 4/5$ for $i < i_0$, since $i \ge t/2+1 \ge 3$ (in fact, $i \ge 4$). It follows that
\begin{eqnarray*}
\xi_2 &\le& n^{o(1)} \sum_{t\ge 4}  \of{\frac t2 +1}^{-\of{\frac t2 +1}}\of{\frac{ 2C\of{\frac t2 +1}^2  \log n}{ tn }}^{\frac{t}{2}} \\
&=& n^{o(1)} \sum_{t\ge 4}  \of{\frac t2 +1}^{-1} \of{\frac{ C\of{t +2} \log n}{ tn }}^{\frac{t}{2}} \le n^{-2+o(1)}.
\end{eqnarray*}

\subsubsection{Case 3: $\max\{0, t-2u_2, i-ku_2\}=0$} It follows that $u_2 \ge t/2$, $u_2 \ge i/k$, and we already know that $t \le 2(i-1)$. In this case, we want to estimate the following: 
\begin{eqnarray*}
\xi_3 &=& \sum_{i=4}^{\frac{n}{\b \log n}} \sum_{t=4}^{2(i-1)} \sum_{u_2=\max \{\frac{t}{2},\frac{i}{k} \}}^{{t \choose 2}} \sum_{u_1 = 0}^{{t \choose 2}-u_2} f(i,t,u_1, u_2) \\
&=& O\of{ \sum_{i=4}^{\frac{n}{\b \log n}} \sum_{t=4}^{2(i-1)} \sum_{u_2=\max \{\frac{t}{2},\frac{i}{k} \}}^{{t \choose 2}} n^t t^{-t} i^{-i}   \of{\frac{Ci^2t^2 \log n }{u_2 n^3}}^{u_2} e^{ 2 C \omega i} } \\
&=& O\of{ \sum_{i=4}^{\frac{n}{\b \log n}} \sum_{t=4}^{2(i-1)} \sum_{u_2=\max \{\frac{t}{2},\frac{i}{k} \}}^{{t \choose 2}} n^t t^{-t} i^{-i}   \of{\frac{kCit^2 \log n }{ n^3}}^{u_2} e^{ 2 C \omega i} } \\
&=& O\of{\sum_{i=4}^{\frac{n}{\b \log n}} \sum_{t=4}^{2(i-1)}  n^t t^{-t} i^{-i}   \of{\frac{kCit^2 \log n }{ n^3}}^{\frac{t}{2}} e^{ 2 C \omega i} }\\
&=& O\of{\sum_{i=4}^{\frac{n}{\b \log n}} \sum_{t=4}^{2(i-1)} i^{-i}  \of{\frac{kCi \log n }{ n}}^{\frac{t}{2}} e^{ 2 C \omega i} } \\
&=& O\of{\sum_{i=4}^{\frac{n}{\b \log n}} i^{2-i} \of{\frac{kC \log n }{ n}}^{2} e^{ 2 C \omega i} }.
\end{eqnarray*}
The ratio of consecutive terms is at most $e^{ 2 C \omega}/(i+1)$ and one can argue as in the previous sub-case that
\begin{eqnarray*}
\xi_3 &\le&  n^{o(1)} \of{\frac{kC \log n }{ n}}^{2} = n^{-2+o(1)}.\\
\end{eqnarray*}

\subsection{$n/(\b \log n) \le i \le n/(\b \sqrt{\log n})$.} In this range of $i$, the terms (in the sum we considered in the previous range) are all exponentially small.  We will estimate the contribution to~(\ref{eq:sum}) from this case using the same notation and strategy. We start with the following estimation that holds for large enough $\beta$:
\begin{eqnarray*}
f(i,t,u_1, u_2)&=&n^{t- \of{\frac{k}{2}-1}i} t^{-t} i^{-i}  \of{ \frac{Cit \log n}{n^2}}^{u_1} \of{\frac{Ci^2t^2 \log n }{u_2 n^3}}^{u_2} e^{ C \of{\omega i + i^2 \log n / n} } \\
&\le& \of{\frac nt}^t i^{-i} \of{ \frac{2C i^2 \log n}{n^2}}^{u_1} \of{\frac{4 Ci^4 \log n }{u_2 n^3}}^{u_2} e^{ C \of{\omega i + i^2 \log n / n} } \\
&\le& \of{\frac nt}^t i^{-i} \of{\frac{4C n }{\b^4 u_2 \log n}}^{u_2} e^{ C \of{\omega i + i^2 \log n / n} } \\
&\le& \of{\frac {n}{2i}}^{2i} \exp \left(-i \log i + O\of{ \frac{n}{\log n}} + O\of{\omega i + i^2 \log n / n} \right) \\
&\le&  \exp \left( 2i(\log \log n + \log \b) - (1+o(1)) i \log n + O\of{ \frac{n}{\log n} } \right)\\
&\le& \exp \Big(- \Omega\of{n} \Big),
\end{eqnarray*}
provided $\omega$ tends to infinity slowly enough. It follows that 
\begin{eqnarray*}
\xi_4 &=&\sum_{i=\frac{n}{\b \log n}}^{\frac{n}{\b \sqrt{\log n}}} \sum_{t} \sum_{u_2} \sum_{u_1} f(i,t,u_1, u_2) \le n^{-2+o(1)}.
\end{eqnarray*}

\subsection{$n/(\b\sqrt{\log n}) \le i \le n - \b n/\log n$}

From now on, we start thinking of multigraph as a graph (that is, we stop caring about edge multiplicity but only whether a given edge of $\G(n,m)$ occurs in $G_I$ or not). In this section, we will be using the following concentration inequalities. Let $X \in \Bin(n,p)$ be a random variable with the binomial distribution with parameters $n$ and $p$. Then, a consequence of Chernoff's bound (see e.g.~\cite[Corollary~2.3]{JLR}) is that 
\begin{equation}\label{chern}
\Prob( |X-\E X| \ge \eps \E X) \le 2\exp \left( - \frac {\eps^2 \E X}{3} \right)  
\end{equation}
for  $0 < \eps < 3/2$. We will also apply the bound of Bernstein (see e.g.~\cite[Theorem~2.1]{JLR}) that for every $x > 0$, 
\begin{equation}\label{Bernstein1}
\Prob \left( X \ge (1+x) \E X \right) \le \exp \left( - \E X \varphi(x) \right) \le \exp \left( - \frac {x^2  \E X} {2(1+x/3)} \right),
\end{equation}
and for every $0 < x < 1$,
\begin{equation}\label{Bernstein2}
\Prob \left( X \le (1-x) \E X \right) \le \exp \left( - \E X \varphi(-x) \right) \le \exp \left( - \frac {x^2  \E X} {2} \right),
\end{equation}
where $\varphi(x)=(1+x)\log (1+x) - x$.

\bigskip

We start with investigating some typical properties of $\G(n,m_-)$. For convenience, let $p_- = m_- / {n \choose 2} = (\log n - \omega+o(1))/n$ and consider $G(n,p_-)$ instead of $\G(n,m_-)$. Let $L$ be the set of vertices with $\deg(v) < \frac{1}{10} \log n$. For a given vertex $v$ in $G(n,p)$, we have $\E \deg(v) = p_- (n-1) = \log n - \omega + o(1)$. It follows from~(\ref{Bernstein2}) that
\begin{eqnarray*}
\Prob \of{ \deg(v) < 0.1 \log n} &\le& \Prob \of{ \deg(v) \le 0.11 \E \deg(v)} \\
&\le& \exp\of{- \E \deg(v) \cdot\of{0.11 \log(0.11) + 0.89}} \le n^{-0.6}.
\end{eqnarray*}
Hence, the expected size of $L$ is at most $n^{0.4}$ and it follows from Markov's inequality that with probability at least $1-n^{-0.55}$ it is smaller than $n^{0.95}$. By Lemma~\ref{lem:gnp_to_gnm}, the same property holds a.a.s.\ for $\G(n,m_-)$ and so we may condition on the fact that $\G(n,m_-)$ satisfies $|L| < n^{0.95}$. 

Now, we are going to orient the edges of $\G(n,m_-)$, which will turn out to be a convenient way to avoid events being dependent. With the goal of showing the existence of such orientation, suppose we randomly orient the edges of $\G(n,m_-)$ and let $\deg^+(v)$ represent the out-degree of vertex $v$. Call a vertex \textbf{very bad} if $\deg^+(v) \le \frac{1}{40} \log n$. Then for any vertex $v \notin L$ (that is, with degree at least $\frac{1}{10}\log n$), we have that $\deg^+(v)$ is stochastically dominated (from below) by $\Bin\of{\frac{1}{10}\log n, \frac{1}{2}}$.
Thus, by~(\ref{chern}), we have
\[
\Prob \of{\deg^+(v) \le \frac{1}{40}\log n} \le \Prob \of{   \Bin\of{\frac{1}{10}\log n, \frac {1}{2}} \le \frac{1}{40}\log n} \le 2 \exp\of{-\frac{1}{12\cdot 20} \log n}.
\]
So the expected number of very bad vertices is at most $|L| + 2 n^{1-1/240} \le n^{0.999}$. Thus, by the basic probabilistic method, there exists an orientation with at most this many very bad vertices. (Recall that $\G(n,m_-)$ is now treated as any deterministic graph with $|L| < n^{0.95}$.)

\bigskip

Our goal is to show that the event $A_i$ (in the range for $i$ considered in this case) does not hold in $\G(n,m)$ with $m_- \le m \le m_+$. However, if $A_i$ does not hold in $\G(n,m_-)$, then it cannot hold in $\G(n,m)$ with $m > m_-$, since,  for a given $I \subseteq W$, adding edges can only decrease $\kappa(G_I)$, the number of components in $G_I$. Hence, it remains to focus on $\G(n,m_-)$. 

We condition on $\G(n,m_-)$ having orientation with at most $n^{0.999}$ very bad vertices. Note that, since edges in the random graph process and colours are generated independently, we may start with $\G(n,m_-)$ (and its orientation) and then test all sets of colours. Hence, fix a set of colours $I \subseteq W$, with $|I|=i$ and $n/(\b \sqrt{\log n}) \le i \le n - \b n/\log n$. Let $\deg_I^+(v)$ represent the out-degree of $v$ in $G_I$,  where $G_I$ is the (oriented) subgraph of $\G(n,m_-)$ consisting of all edges which have at least one colour from $I$ on their list and the orientations are retained from $\G(n,m_-)$. The probability that a given edge $e \in \G(n,m_-)$ is in $G_I$ is equal to
$$
1 - \frac { {n-1-i \choose k} }{ {n-1 \choose k} } = 1 - \of{ 1- \frac {i}{k} }^k \of{ 1 + O\of {\frac {1}{n-i} } } = 1 - \of{ 1- \frac {i}{k} }^k + O\of {\frac {1}{n} } \ge \frac {i}{n}.
$$
In particular, note that for $i$ in this range, the average degree in $G_I$ is at least $\frac {2m_-}{n} \cdot \frac {i}{n} = \Theta(i \log n / n) = \Omega(\sqrt{\log n}) \to \infty$ as $n\to \infty$. For a vertex $v$ which is not very bad, we have that $\deg_I^+(v)$ is binomial and stochastically dominates $\Bin\of{\frac{1}{40} \log n, \frac{i}{n}}$. We call a vertex $v$ in $G_I$ \textbf{bad} if $\deg^+_I(v)$ satisfies
\[
\deg_I^+(v) \le \frac 12 \cdot\frac 1{40}\log n \cdot\frac{i}{n} = \frac{1}{80} \cdot \frac {i \log n}{n}.
\]
So, it follows from~(\ref{chern}) that the probability that a non-very bad vertex is bad is at most
\begin{align*}
\Prob \of{ \Bin\of{\frac{1}{40} \log n, \frac{i}{n}} \le \frac{1}{80} \cdot \frac {i \log n}{n}} &\le \exp\of{ \frac{1}{480} \cdot \frac {i \log n}{n} }.
\end{align*}

Now, let $X_b=X_b(I)$ represent the number of bad vertices in $G_I$ that are not very bad. The expectation of $X_b$ is at most 
\[
E_b=E_b(I):= n\cdot \exp\of{- \frac{1}{480} \cdot \frac {i \log n}{n} }.
\] 
Recall that the reason for introducing the orientation of $\G(n,m_-)$ is to make the events of being bad to be independent of one another. Hence, $X_b$ is stochastically dominated by random variable $Z \sim \Bin\of{n, \exp\of{-i\log n/(480n)}}$ with expectation $E_b$. Thus, using~(\ref{Bernstein1}) we get that
\begin{align*}
\Prob \of{X_b \ge E_b + \gamma n} &\le \Prob \of{Z \ge E_b + \gamma n}\\
&\le \exp\of{-E_b\cdot\of{\of{1+\frac{\gamma n}{E_b}}\log\of{1+\frac{\gamma n}{E_b}} - \frac{\gamma n}{E_b}}}.
\end{align*}
Let us set $\gamma = \frac{500 n}{i \log n} \to 0$ as $n \to \infty$. Then, 
$$
\gamma n /E_b = \gamma \exp \of{ \frac {i\log n}{480 n} } = \Omega(\log^{-1} n) \cdot \exp \of{ \Omega( \sqrt{\log n} ) } \to \infty
$$ 
as $n \to \infty$. Thus, we get
\begin{eqnarray*}
\Prob \of{X_b \ge E_b + \gamma n} &\le& \exp \of{ -\gamma n \log \of{ \frac {\gamma n}{E_b} } (1+o(1)) }\\
&=& \exp\of{- \frac{500 n^2}{i \log n}\of{ \log \gamma  + \frac{i \log n}{480 n}}(1+o(1)}\le \exp\of{-n},
\end{eqnarray*}
since $\log \gamma = O (\log \log n) = o (\sqrt{\log n}) = o( i \log n / n)$.

Finally, we may union bound over all choices of $I$ in this range to get that 
$$
\Prob \of{\exists I\,:\,\frac{n}{\beta\sqrt{\log n}} \le i \le n - \frac{\b n}{\log n}\,,\,X_b(I) \ge E_b(I) + \gamma n}
\le 2^n\cdot \exp\of{-n} = o(1).
$$
Thus, a.a.s.\ every $G_I$ has at most $n^{0.999} + (1+o(1) \gamma n = (1+o(1)) 500 n^2 / (i \log n)$ many bad vertices. Let us condition on this. Every vertex which is not bad is in a component of size at least $(i \log n) / (80 n)$. Hence we have
\[\kappa(G_I)\le (1+o(1))  \frac {500 n^2}{i \log n} + \frac{80n^2}{i \log n} \le \frac{600n^2}{i \log n} \]
and this is less than $n- i$ as long as $i \le n - \b n/\log n$ for $\b$ sufficiently large. The contribution to~(\ref{eq:sum}) from this case is $o(1)$.

\subsection{ $n-\beta n/\log n \le i \le n-1-k$}

In this range for $i$, we find it convenient to think of the complement of the set $I$, that is, $J:=W\sm I$. Thus Edmond's condition~(\ref{eq:condition}) becomes 
\[\kappa(G_{W\sm J}) \le |J|+1.\] 
As usual, any set $J$ which does not satisfy this condition will be called \textbf{bad}. We will show that with probability at least $1-n^{-1.8+o(1)}$ there is no bad set $J$ of cardinality at least $k$ and at most $\beta n/\log n$ in $G(n,p)$ (with $p = m/{n \choose 2}$ for some $m_- \le m \le m_+$) and so the same holds for $\G(n,m)$ with probability at least $1-n^{-1.3+o(1)}$ by Lemma~\ref{lem:gnp_to_gnm}. The contribution to~(\ref{eq:sum}) from this case will be $\sum_{m=m_-}^{m_+} \sum_{i=n-\b n / \log n}^{n-1-k} \Prob_m(D_i) = o(1)$.

\bigskip

Given a bad set $J$ with $|J|=j$ such that $k \le j \le \beta n / \log n$, $G_{W\sm J}$ has at least $j+2$ components, and so some collection of these components contains $s$ vertices in total, where $j+1 \le s \le n/2$. Indeed, suppose that $G_{W\sm J}$ has $\xi \ge j+2$ components of orders $n_1 \le n_2 \le \ldots \le n_{\xi}$. If $n_{\xi} \ge n/2$, then $n_1 + n_2 + \ldots +n_{\xi-1}$ is at most $n/2$ and clearly at least $\xi-1 \ge j+1$. If $j+1 \le n_{\xi} < n/2$, then taking the largest component suffices. Finally, if $n_{\xi} \le j$, then we can choose $r$ such that $n_1 + n_2 + \ldots +n_{r} \le n/2$ but $n_1 + n_2 + \ldots +n_{r+1} > n/2$, and then
$$
n_1 + n_2 + \ldots +n_{r} > n/2 - n_{r+1} \ge n/2 - j \ge j+1,
$$
for $n$ sufficiently large, since $j = o(n)$.

First, we will show that a.a.s.\ there is no set of colours $J$ of size $j$ (in the desired range) and no set of vertices $S$ of size $s$ such that 
$$\jlo \le s\le n/2$$ 
(in particular, $s \to \infty$ as $n \to \infty$) with no edges between $S$ and $V\sm S$ in $G_{W\sm J}$.  Indeed, the expected number of such pairs of sets $J$ and $S$ is at most 
$$
X = \sum_{j=k}^{\beta n / \log n} \;\;\; \sum_{s=\jlo}^{ n/2 }\binom{n-1}{j} \binom{n}{s} (1-p_I)^{s(n-s)},
$$
where $p_I$ is an edge probability in $G_I = G_{W\sm J}$. Note that
\[p_I \ge \frac {m_-}{ {n \choose 2} } = \frac{\log n - \omega}{n}\cdot \of{1-\frac{\binom{j}{k}}{\binom{n-1}{k}}} = \frac{\log n - \omega(1+o(1))}{n}.\]
Hence,
\begin{align*} 
X & \le \sum_{j=k}^{\beta n / \log n} \;\;\;\sum_{s=\jlo}^{n/2} \exp\left\{ j \log \of{\frac{ne}{j}}+ s \log \of{\frac{ne}{s}}- s(n-s)p_I\right\}\\
& \le \sum_{j=k}^{\beta n / \log n} \;\;\;\sum_{s=\jlo}^{n/2} \exp\left\{ j \log \of{\frac{ne}{j}} - s \log s +s^2 \frac{\log n}{n}(1+o(1)) +s\omega(1+o(1)) \right\}.
\end{align*}
Note that the inner sum is dominated by its first term. To see this let $a_s$ be the $s$th term and consider the ratio of consecutive terms:
\begin{align*}
\frac{a_{s+1}}{a_s} &= \exp\set{s\log s - (s+1)\log(s+1) +(2s+1) \frac{\log n}{n}(1+o(1)) + \omega(1+o(1))}\\
&=\exp\set{-\log(s+1) - s\log\of{\frac{s+1}{s}} + \of{\frac{2s\log n}{ n} + \omega}\opoo}\\
&=\exp\set{-\log(s+1) + \of{\frac{2s\log n}{ n} + \omega}\opoo}.
\end{align*} 
Suppose $\jlo \le s\le n/\log n$. Then $s=\Omega(\log n)$ and $s\log n /n=O(1)$ so we have
\[\frac{a_{s+1}}{a_s}=\exp\set{-\log(s+1)+ O( \omega)} = o(1)\]
as long as $\omega$ grows sufficiently slowly.
Now suppose $n/\log n \le s \le 0.49 n$. Then
\[\frac{a_{s+1}}{a_s} \le \exp\set{-\log n + 2\frac{0.49n\log n}{n} + O\of{\omega + \log\log n}} = o(1).\]
Thus $\sum_{s=\jlo}^{ 0.49 n } a_s$ is of order of its first term which is bounded by
\begin{align*}
&\quad\exp\set{j\log\of{\frac nj} +j -\jlo \of{\log\of{\jlo} - \omega\opoo} }\\
&\le\exp\set{-j\log\of{\frac nj} +j - \jlo \cdot\of{\log 2 + \log\log \of {n/j} - \log\log j - \omega\opoo} }\\
&\le\exp\set{-j\log\of{\frac nj} \of{ 1 - o(1) + \frac {2}{\log j} \cdot \of{ \log 2 + \log \log (n/j) -  \log \log j - \omega\opoo} }}\\
&\le \exp\set{-.9 j\log\of{\frac nj}},
\end{align*}
since for $j\le \log n$, we have
\[ \log 2 + \log\log \of {n/j} - \log\log j - \omega\opoo   = \log\log (n/j) \opoo \to \infty \]
and for $j > \log n$, we have
\[\frac{- \log\log j - \omega}{\log j} = - o(1),\]
where in both cases we assume $\omega\to \infty$ sufficiently slowly. Moreover,
\begin{align*}
\sum_{s=.49n}^{ n/2 }\binom{n-1}{j} \binom{n}{s} (1-p_I)^{s(n-s)} &\le \sum_{s=.49n}^{ n/2 }2^n\cdot 2^n\cdot \exp\of{- \Omega(n^2p_I)}
&\le \exp\of{-\Omega(n\log n)},
\end{align*}
and so 
$$ 
X = O \of{ \sum_{j=k}^{\beta n / \log n} \exp\left\{-0.9 j \log \of{\frac{n}{j}}  \right\} }.
$$ 
It remains to show that it is of order at most $n^{-1.8}$. To see this, note that the above sum is dominated by its first term since the ratio of consecutive terms is 
\[\frac{\of{\frac{j+1}{n}}^{0.9(j+1)}}{\of{\frac{j}{n}}^{0.9j}} = {\bfrac{j+1}{n}}^{0.9} \of{ \frac{j+1}{j}}^{0.9j} = O \of{ {\bfrac{j+1}{n}}^{0.9} } = o(1). \]

\bigskip

We now consider the case when $s \le \jlo$. When  $n^{3/4}\le j\le \beta n/\log n$, we have that
\[ s \le \jlo \le 2j\cdot \frac{\frac 14 \log n}{\frac 34 \log n} = \frac 23 j < j+1,\]
which is a contradiction.
Thus the previous case covers all $s$ for such $j$ and so in this case we only consider $j\le n^{3/4}$. Our technique for this range of $s$ is slightly different. In the previous case, $s$ was large enough that in $G_{W\sm J}$, no set of $s$ vertices could have no edges to its complement. Now, we allow for such a possibility. Since for a bad set $J$ we have $\kappa(G_{W\sm J}) \ge j+2$, we can partition the vertices into sets of size $x_1,x_2,\ldots, x_{j+1}$ and $n-s$ (where $s = x_1 +\cdots + x_{j+1}$) such that there are no edges crossing these parts in $G_{W\sm J}$.  Since $G=G_W$ is connected, there must be $j+1$ edges in $G$ whose colour sets are subsets of $J$ (and therefore they will not occur in $G_{W\sm J}$) where each edge has at least one endpoint in $S$.   Below, we bound the expected number of pairs $J$ and $S$ satisfying the above description.
$$
Y = \sum_{j = k}^{n^{3/4}}\;\;\ \sum_{s=j+1}^{ \jlo } \;\;\ \sum_{x_1 + \ldots + x_{j+1} = s} \binom{n-1}{j} \frac{(n)_s}{\prod_{\i} x_\i !} \cdot\binom{sn}{j+1} \of{1-p_I}^{s\of{n-s}-j-1 } \cdot  \of{p \cdot  \frac{\binom{j}{k}}{\binom{n-1}{k}}}^{j+1},
$$
where $p_I$ is defined as before and $p \le m_+/{n \choose 2} = (\log n + \omega)/n$. To understand the terms in the above sum, first choose $j$ colours, then partition the vertices into sets of sizes $x_1,\ldots, x_j, n-s$ ($\frac{(n)_s}{\prod_{\i} x_\i !}$ is the appropriate multinomial coefficient for this task). The next factor is an upper bound on the number of choices for $j+1$ edges with one endpoint in $S$. There are at least $s(n-s) - (j +1)$ non-edges in $G_{W\sm J} = G_I$ which explains the next factor. The last one is the probability that the specified $j+1$ edges appear in $G_W$ but not $G_{W\sm J}$.
We get
\begin{align*}
Y &\le \sum_{j = k}^{n^{3/4}}\;\;\ \sum_{s=j+1}^{\jlo} \;\;\ \sum_{x_1 + \ldots + x_{j+1} = s} \exp \Big\{ j \log \of{\frac{ne}{j}} + s \log n - \sum x_\i \log \of{ \frac{x_\i}{e}}+ (j+1) \log \of{\frac{sne}{j+1}} \\
& - p_I \cdot [s\of{n-s}-j-1]   + (j+1)k \log j - (j+1)(k+1)\log n +(j+1) \log (\log n + \omega) \Big\}.
\end{align*}
We use convexity of the function $x\log(x/e)$ and Jensen's inequality to bound the sum of such terms. Keeping in mind that $j+1 \le s = O(j\log n)$ and $j \le n^{3/4}$ we also have that
\begin{align*}
-p_I\of{s(n-s)-j-1} &\le - \bfrac{\log n - \omega}{n}\of{1-\frac{\binom{j}{k}}{\binom{n}{k}}}\of{sn - (s^2 + j + 1)}\\
&= - \bfrac{\log n - \omega}{n}\of{1-O\of{n^{-1/2}}}\cdot sn\of{1+O\bfrac{s}{n}}\\
&=-s(\log n - \omega)\of{1+O\bfrac{\log n}{n^{1/4}}}\\
&=- s\log n + s\omega\opoo.
\end{align*}
Hence,
\begin{align*}
Y &\le \sum_{j = k}^{n^{3/4}}\;\;\ \sum_{s=j+1}^{\jlo} \;\;\ \sum_{x_1 + \ldots + x_{j+1} = s} \exp \Big\{ j \log \of{\frac{ne}{j}}  - (j+1)\cdot \frac{s}{j+1} \log \of{\frac{s}{(j+1)e}} \\
& + s\omega(1+o(1))  + (j+1) \log \of{\frac{sne}{j+1}} + (j+1)k \log j - (j+1)(k+1)\log n +(j+1) \log \log n \Big\}.
\end{align*}
The number of terms in the inner-most sum is bounded from above by $\binom{s}{j} \le \of{\frac {se}{j}}^j$ and by expanding $\log$s and collecting like terms, we get
\begin{align*}
Y &\le \sum_{j = k}^{n^{3/4}}\;\;\ \sum_{s=j+1}^{\jlo}  \exp\Big\{j \log \of{\frac{se}{j}} -\sqbs{(k-1)j +k } \log n + \sqbs{j+1-s} \log s \\
&\qquad + \sqbs{(k-1)j+k} \log j + \of{s-j-1} \log (j+1) +(j+1) \log \log n + s\omega\opoo \Big\} \\
&\le \sum_{j = k}^{n^{3/4}}\;\;\ \sum_{s=j+1}^{\jlo}  \exp \Big\{-\sqbs{(k-1)j +k } \log \of{\frac{n}{j}} - \of{s-j-1} \log \of{\frac {s}{j+1}} \\
&\qquad +s\omega \opoo  +O\of{j \log \log n}  \Big\},
\end{align*}
where we have included $j\log\bfrac{se}{j}$ in the $O(j\log\log n)$ term since $s/j = O(\log n).$  Hence,
\begin{align}
Y &\le \sum_{j=k}^{n^{3/4}} \exp \left\{ -\sqbs{(k-1)j +k}\log \bfrac{n}{j} + O(j\log\log n)\right\} \label{eq:dubsum}\\
&\qquad\quad \times \of{\sum_{s=j+1}^{\jlo}\exp\set{-(s-j-1)\log\bfrac{s}{j+1} + s\omega \opoo}}. \nonumber
\end{align}
We will now split the inner sum at $s_0=j\log\log n/\omega$.
Then for $s\le s_0$ we have
\begin{align*}
\sum_{s=j+1}^{s_0} & \exp\set{-(s-j-1)\log\bfrac{s}{j+1} + s\omega \opoo}\\
&\le s_0\exp\set{s_0\omega \opoo} \\
&= \exp\set{j\log\log n\opoo},
\end{align*}
provided that, say, $\omega = o(\log \log n)$.
For $s\ge s_0$ we have
\begin{align*}
\sum_{s=s_0}^{\jlo} & \exp\set{-(s-j-1)\log\bfrac{s}{j+1} + s\omega \opoo}\\
&\le \sum_{s=s_0}^{\jlo}\exp\set{-s\log\bfrac{s_0}{j+1}\opoo + s\omega \opoo}\\
&\le \sum_{s=s_0}^{\jlo}\exp\set{-s\log\bfrac{s_0}{j+1}\opoo}
\end{align*}
as long as $\omega_0 = o(\log\log\log n)$. This sum is dominated by its first term, that is, 
\[\exp\set{-s_0 \log\bfrac{s_0}{j+1}\opoo} = o(1).\]
Therefore the inner sum in \eqref{eq:dubsum} can be bounded by $\exp\set{j\log\log n\opoo}$.
As a result
$$
Y = O \of{ \sum_{j=k}^{n^{3/4}}\left(-\of{(k-1)j +k}\log \bfrac{n}{j} + O(j\log\log n)\right) }
$$
which is dominated by its first term as seen by examining the ratio of consecutive terms. Thus, $Y$ is at most $n^{-4 + o(1)}$ and so the probability  there is no bad set $J$ of cardinality at least $k$ and at most $\beta n/\log n$ in $G(n,p)$ is at least $1-n^{-1.8+o(1)}$ and the final case to consider is finished.

\end{document}